\documentclass[11pt]{article}

\usepackage[a4paper,margin=1in]{geometry}
\usepackage[T1]{fontenc}
\usepackage[utf8]{inputenc}
\usepackage[english]{babel}

\usepackage{amsmath,amssymb,amsthm,mathtools}
\usepackage{graphicx}
\usepackage{wrapfig}
\usepackage{tikz}
\usepackage[hidelinks]{hyperref}
\usepackage{cleveref}

\newtheorem{theorem}{Theorem}

\newtheorem{lemma}{Lemma}

\theoremstyle{definition}

\newtheorem{remark}{Remark}

\title{
  Area of the Limit Curve\\
  for Symmetric Nonstationary\\
  Corner-Cutting Schemes
}

\author{
  Georgii Khoruzhevskii\\
  Independent researcher, Belgrade, Serbia\\
  \texttt{khoruzhevskiigeorgii@gmail.com}
}

\date{}

\begin{document}

\maketitle

\begin{abstract}
We consider corner-cutting schemes for closed oriented polygons in the plane.
At each step every vertex is replaced by two points on the adjacent edges; the
same cutting coefficient is used at all vertices of the current polygon, but
this coefficient may vary from step to step. For such schemes we prove the
existence of a continuous limit curve and derive an explicit formula for its
oriented area. The main result separates two contributions: the geometry of
the initial contour enters through its oriented area and a local sum of
determinants built from neighboring edges, while the entire dependence on the
refinement rule is contained in a single numerical constant. We also discuss
the geometric meaning of this constant as an area arising in the standard
angle. As applications we consider stationary de Rham curves, the Chaikin
scheme, the connection with Archimedes' quadrature of the parabola, and special
sequences of coefficients leading to a circle and to an arc of a hyperbola;
these give corresponding series for \(\pi\) and \(\log 2\).
\end{abstract}

\tableofcontents

\section{Introduction}

Consider a closed polygonal contour: a rough sketch of a future curve. To make
it smoother, one may cut every corner. On the two edges meeting at a vertex, two
new points are chosen, and the vertex itself is replaced by the short segment
between them. After one step one obtains a new polygon with more sides; after
many steps the contour looks increasingly smooth.

\begin{figure}[ht]
  \centering
  \begin{tikzpicture}[
      x=1cm,
      y=1cm,
      line cap=round,
      vertex/.style={circle,fill=black,inner sep=1.4pt},
      cutpoint/.style={circle,fill=white,draw=black,inner sep=1.3pt},
      original/.style={line width=1.1pt,black},
      first/.style={line width=1.1pt,blue!70!black},
      second/.style={line width=1.1pt,red!70!black},
      continuation/.style={line width=0.9pt,densely dashed,opacity=0.55},
      label/.style={font=\small,anchor=east,align=right},
      row/.style={xshift=3.3cm}
    ]
    \node[label] at (2.50,0.70) {initial\\polyline};
    \begin{scope}[row]
      \draw[original,continuation] (-0.55,-0.46) -- (0,0);
      \draw[original] (0,0) -- (1.6,1.35) -- (3.4,0.15) -- (5.0,1.25);
      \draw[original,continuation] (5.0,1.25) -- (5.55,1.63);
      \foreach \x/\y in {0/0,1.6/1.35,3.4/0.15,5.0/1.25} \node[vertex] at (\x,\y) {};
    \end{scope}

    \node[label] at (2.50,-1.45) {after one\\step};
    \begin{scope}[row,yshift=-2.15cm]
      \draw[black!25,line width=0.8pt] (0,0) -- (1.6,1.35) -- (3.4,0.15) -- (5.0,1.25);
      \draw[first,continuation] (0.25,0.21) -- (0.70,0.59);
      \draw[first] (0.70,0.59) -- (1.15,0.97) -- (2.10,1.01) -- (2.90,0.49) -- (3.85,0.46) -- (4.30,0.77);
      \draw[first,continuation] (4.30,0.77) -- (4.75,1.08);
      \foreach \x/\y in {1.15/0.97,2.10/1.01,2.90/0.49,3.85/0.46} \node[cutpoint] at (\x,\y) {};
    \end{scope}

    \node[label] at (2.50,-3.60) {after two\\steps};
    \begin{scope}[row,yshift=-4.30cm]
      \draw[black!20,line width=0.8pt] (0,0) -- (1.6,1.35) -- (3.4,0.15) -- (5.0,1.25);
      \draw[blue!25,line width=0.8pt] (1.15,0.97) -- (2.10,1.01) -- (2.90,0.49) -- (3.85,0.46);
      \draw[second,continuation] (0.38,0.32) -- (0.60,0.51);
      \draw[second] (0.60,0.51) -- (0.83,0.70) -- (1.42,0.98) -- (1.84,1.00) -- (2.33,0.87) -- (2.67,0.63) -- (3.16,0.48) -- (3.58,0.47) -- (4.17,0.68) -- (4.39,0.83);
      \draw[second,continuation] (4.39,0.83) -- (4.62,0.99);
      \foreach \x/\y in {0.83/0.70,1.42/0.98,1.84/1.00,2.33/0.87,2.67/0.63,3.16/0.48,3.58/0.47,4.17/0.68} \node[cutpoint] at (\x,\y) {};
    \end{scope}
  \end{tikzpicture}
  \caption{A local picture of corner cutting on a fragment with three edges.}
  \label{fig:local-corner-cutting}
\end{figure}

A natural question arises: can one find the area of the limit curve without
being able to write the curve itself explicitly? It turns out that, for the
schemes considered here, this is possible. Moreover, the entire geometry of the
initial polygon is described by only two quantities, while the infinite sequence
of cuts contributes a single universal constant. At the end of the paper we
will see how this leads to geometric series for \(\pi\) and \(\log 2\).

This is precisely how corner-cutting schemes work. They are a classical way to
construct limit curves from polygonal control data; in modern terminology this
is one of the simplest examples of subdivision schemes \cite{DynLevin2002}. The
best-known example is Chaikin's algorithm, whose limit is a quadratic
\(B\)-spline\begingroup\renewcommand{\thefootnote}{\fnsymbol{footnote}}%
\footnote[1]{A spline is a curve glued from simple polynomial pieces. A
quadratic spline means that each such piece is given by polynomials of degree at
most two; geometrically it is an arc of a parabola or a degenerate case of one.
A \(B\)-spline is not a new kind of curve, but a convenient way to write such
smoothly glued pieces in terms of control points; the letter \(B\) stands for
\emph{basis}. Thus in this note the phrase ``quadratic \(B\)-spline'' may be
read as ``a smoothly glued chain of parabolic pieces''.}\endgroup{} curve, that
is, roughly speaking, a smoothly glued chain of parabolic pieces.

In this note we consider a particularly simple but flexible variant: at each
refinement level the same coefficient is used for all corners. When passing to
the next level, this coefficient is allowed to change. This choice keeps the
same treatment of all vertices within a single step while allowing the
refinement rule to vary from level to level.

We are interested not so much in the smoothness of the limit curve as in its
area. Even if the curve itself is hard to write explicitly, its area can be
found directly from the initial polygon and the cutting coefficients. We use
oriented area: it records the direction in which the contour is traversed, and
therefore changes sign when the orientation is reversed.

The main result may be read as follows: the area of the limit curve is the area
of the initial polygon plus a very simple correction. If
\(P=(p_0,\ldots,p_{n-1})\) is the initial oriented polygon and
\((c_k)_{k\ge 0}\) is the sequence of cutting coefficients, then the oriented
area of the limit curve has the form
\[
  A_\infty
  =
  A(P)
  +
  C(c_\bullet)
  \sum_i
  \det(p_{i-1}-p_i,\ p_{i+1}-p_i),
\]
Here \(A(P)\) is the oriented area of the initial polygon. The sum of
determinants describes the local turns of the initial contour near its vertices,
whereas the number \(C(c_\bullet)\) depends only on the chosen sequence of
cuts. This constant has the form
\[
  C(c_\bullet)
  =
  \sum_{k=0}^{\infty}
  2^k\frac{c_k^2}{2}
  \prod_{j=0}^{k-1}c_j(1-2c_j),
\]
where the empty product for \(k=0\) is equal to \(1\). In other words, the
geometry of the initial polygon and the smoothing rules enter the formula
separately.

To read the paper it is enough to know the determinant of two vectors, the area
formula for a polygon, and elementary properties of series. In some examples we
also use a parametrization of a curve and very simple integrals.

\section{The Corner-Cutting Scheme}

Let us pass from the picture to precise notation. Below, a polygon means an
ordered cyclic collection of points in the plane. Thus the notation
\[
    P=(p_0,p_1,\ldots,p_{n-1})
\]
means that the vertices are connected in the indicated order, and after
\(p_{n-1}\) one returns to \(p_0\). The polygon need not be convex; for the area
formula, the orientation of the traversal is what matters.

All indices below are understood cyclically. If a polygon has \(N\) vertices,
then \(p_N=p_0\), \(p_{-1}=p_{N-1}\), and so on. This convention lets us avoid
separate endpoint cases.

Fix a sequence of cutting coefficients
\[
    c_0,c_1,c_2,\ldots,
    \qquad
    0<c_m<\frac12.
\]
The coefficient \(c_m\) is used at the \(m\)-th step for all vertices
simultaneously. In other words, within one step the rule is the same for all
vertices, but it may change from one step to the next.

Let us describe one step. Suppose an oriented polygon has already been
constructed:
\[
    P^{(m)}=\bigl(p^{(m)}_0,p^{(m)}_1,\ldots,p^{(m)}_{N_m-1}\bigr).
\]
On each of its edges
\([p^{(m)}_i,p^{(m)}_{i+1}]
\) we take two points
\[
    p^{(m+1)}_{2i}
    =
    (1-c_m)p^{(m)}_i+c_m p^{(m)}_{i+1},
\]
\[
    p^{(m+1)}_{2i+1}
    =
    c_m p^{(m)}_i+(1-c_m)p^{(m)}_{i+1}.
\]
The first point lies closer to the beginning of the old edge, and the second
closer to its end. The old vertices are then removed, and all new points are
connected in the order of their indices. The resulting polygon is denoted by
\(P^{(m+1)}\).

Equivalently, each old vertex \(p^{(m)}_i\) is replaced by a short new edge
connecting two points on the adjacent old edges. This edge is precisely the
``cut-off corner''. Each old edge gives two new vertices, hence
\[
    N_m=n2^m.
\]

The condition \(0<c_m<1/2\) means that the cut is nondegenerate. If \(c_m=0\),
the shape of the polygon would in fact not change, while if \(c_m=1/2\), the
two new points on each old edge would coincide at its midpoint. These limiting
cases are useful for intuition, but we do not include them in the main scheme.

For an arbitrary \(c\in(0,1/2)\), denote by \(T_c\) the one-step operator given
by the rule above. Then the entire sequence of polygons is defined recursively:
\[
    P^{(m+1)}=T_{c_m}P^{(m)},
    \qquad
    P^{(0)}=P.
\]
Equivalently,
\[
    P^{(m)}
    =
    T_{c_{m-1}}\circ T_{c_{m-2}}\circ\cdots\circ T_{c_0}(P),
    \qquad m\ge 1.
\]

Below we check that infinite iteration of this procedure produces a continuous
limit curve. We denote it by \(P^{(\infty)}\). We shall be interested both in
its parametric equation and in the oriented area that it bounds.

\section{Oriented Area of Polygons}

In this section we fix notation for area. For two vectors \(u=(u_1,u_2)\) and
\(v=(v_1,v_2)\), write
\[
    [u,v]
    =
    \det
    \begin{pmatrix}
        u_1 & u_2 \\
        v_1 & v_2
    \end{pmatrix}
    =
    u_1v_2-u_2v_1.
\]
Geometrically, \([u,v]\) is the oriented area of the parallelogram spanned by
\(u\) and \(v\).

In this section the letter \(Q\) denotes an arbitrary oriented polygon. This is
convenient because it distinguishes it from the initial polygon \(P\): later the
formulas obtained here will be applied to current polygons \(Q=P^{(m)}\).

Thus let
\[
    Q=(q_0,\ldots,q_{N-1})
\]
be an oriented polygon in the plane. Its oriented area is defined by
\[
    A(Q)
    =
    \frac12
    \sum_{i=0}^{N-1}
    [q_i,q_{i+1}],
\]
where, as usual, \(q_N=q_0\).

This formula is known as Gauss' area formula, or the shoelace
formula\begingroup\renewcommand{\thefootnote}{\fnsymbol{footnote}}%
\footnote[1]{The formula is a special case of Green's theorem, which in turn is
a special case of Stokes' theorem. For readers not familiar with them, induction
is always available.}\endgroup. If the coordinates of the vertices are
\(q_i=(x_i,y_i)\), it becomes
\[
    A(Q)
    =
    \frac12
    \sum_{i=0}^{N-1}
    (x_i y_{i+1}-y_i x_{i+1}).
\]

It is important that \(A(Q)\) is an oriented area. If the vertices of an ordinary
convex polygon are traversed counterclockwise, it is positive. If the direction
of traversal is reversed, the sign of the area changes:
\[
    A(q_{N-1},q_{N-2},\ldots,q_0)=-A(Q).
\]
For self-intersecting polygons this quantity also makes sense, now as an
algebraic area: regions traversed with different orientations are counted with
different signs.

We shall also need a local quantity associated with a single vertex of a
polygon. For the vertex \(q_i\), consider the two vectors issuing from it toward
the neighboring vertices:
\[
    q_{i-1}-q_i,
    \qquad
    q_{i+1}-q_i.
\]
Their determinant
\[
    [q_{i-1}-q_i,\;q_{i+1}-q_i]
\]
is twice the oriented area of the triangle with vertices
\(q_{i-1},q_i,q_{i+1}\). For convenience, introduce also
\[
    B(Q)
    =
    \sum_{i=0}^{N-1}
    [q_{i-1}-q_i,\;q_{i+1}-q_i].
\]

This collects, over all vertices of the polygon, the determinants built from the
two edges adjacent to the given vertex. Geometrically each summand is twice the
oriented area of the triangle with vertices \(q_{i-1},q_i,q_{i+1}\). This
interpretation will not be essential for us, however: one may simply regard
\(B(Q)\) as an abbreviation that makes the subsequent computations more
readable.

It is precisely the quantity \(B(Q)\) that will appear in the formula for the
change of area under one corner-cutting step. In particular, for the initial
polygon \(P=(p_0,\ldots,p_{n-1})\), we shall use the notation
\[
    B(P)
    =
    \sum_{i=0}^{n-1}
    [p_{i-1}-p_i,\;p_{i+1}-p_i].
\]

\section{The Limit Curve and Convergence of Areas}

Let us explain why the computations below are legitimate: namely, why the limit
curve actually exists and why its area can be obtained as the limit of the areas
of the approximating polygons.

Denote by
\[
    s_i=\frac{p_i+p_{i+1}}2
\]
the midpoint of the edge \([p_i,p_{i+1}]\) of the initial polygon. These points
are convenient as boundaries of local pieces: the part of the curve between
\(s_{i-1}\) and \(s_i\) is constructed only from the three neighboring vertices
\(p_{i-1},p_i,p_{i+1}\). This follows from the locality of the corner-cutting
scheme.

\begin{lemma}
The sequence of polygons \(P^{(m)}\) has a continuous closed limit curve
\(P^{(\infty)}\) in the Hausdorff metric. Moreover, if
\[
    A_m=A(P^{(m)}),
\]
then the oriented area of the limit curve is the limit of the oriented areas of
the approximating polygons:
\[
    A_\infty=\lim_{m\to\infty}A_m.
\]
\end{lemma}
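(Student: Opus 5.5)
I will parametrize the polygons and prove that the parametrizations converge uniformly; both claims then follow.

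For each \(m\), let \(\gamma_m:[0,1]\to\mathbb R^2\) be the closed piecewise linear parametrization of \(P^{(m)}\). Its breakpoints are at \(t=(j+\tfrac12)/N_m\), and the vertex \(p^{(m)}_j\) sits there. With this placement the two new vertices \(p^{(m+1)}_{2i},p^{(m+1)}_{2i+1}\) on the old edge \([p^{(m)}_i,p^{(m)}_{i+1}]\) occupy parameter values inside the parameter interval of that edge. This choice is compatible with the midpoint convention \(s_i\) of the text.

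\textbf{Step 1: geometric decay of edges.} Let \(h_m=\max_j|p^{(m)}_{j+1}-p^{(m)}_j|\) be the maximal edge length. New edges are of two kinds:
\[
p^{(m+1)}_{2i+1}-p^{(m+1)}_{2i}=(1-2c_m)\bigl(p^{(m)}_{i+1}-p^{(m)}_i\bigr),
\]
\[
p^{(m+1)}_{2i+2}-p^{(m+1)}_{2i+1}=c_m\bigl(p^{(m)}_{i+1}-p^{(m)}_i\bigr)+(1-c_m)\bigl(p^{(m)}_{i+2}-p^{(m)}_{i+1}\bigr).
\]
The second kind is a convex combination of two old edges, so it only gives \(h_{m+1}\le h_m\). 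One step alone is therefore not contractive.

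The fix is to take two steps. After a second step, every edge is a convex combination involving at least one factor \((1-2c)\), or weight \(c\) or \(1-c\) on non-parallel pieces. This alone is not enough: if \(c_m\to0\), the steps degenerate and there may be no uniform contraction. This is the main obstacle.

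\textbf{Step 2: convergence of the parametrizations.} The distance between consecutive parametrizations satisfies
\[
\|\gamma_{m+1}-\gamma_m\|_\infty\le c_m h_m,
\]
since each new point lies within \(c_mh_m\) of the old polygon at a comparable parameter. Uniform convergence therefore follows from \(\sum_m c_m h_m<\infty\).

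\textbf{Step 3: handling the decay (main obstacle).} I would try to prove the needed decay directly from the edge formulas. Consider the edge-difference vectors \(d_j=e_{j+1}-e_j\), where \(e_j\) are the edges. Under one step they transform linearly. A computation gives the following.

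The new differences are \((1-2c)\) times old differences, or \(c\) times old differences, up to combination. Hence
\[
\max|d^{(m+1)}|\le\max(c_m,1-2c_m)\,\max|d^{(m)}|.
\]
Moreover, the edges between the two points of an old edge shrink by \(1-2c_m\).

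If the statement is intended for arbitrary sequences, I would look for a counterexample with \(\sum c_m<\infty\). In that case the polygons converge to a limit whose edges do not shrink, and the limit is still a closed polygon-like curve. Convergence of \(\gamma_m\) still holds, because \(\|\gamma_{m+1}-\gamma_m\|\le c_m h_0\) and \(\sum c_m<\infty\).

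So I would split into two cases.
\begin{itemize}
\item If \(\sum c_m<\infty\), Step 2 with \(h_m\le h_0\) already gives uniform convergence.
\item If \(\sum c_m=\infty\), I would use the difference contraction above together with \(\prod\max(c_m,1-2c_m)\to0\) to show that edge differences shrink to \(0\). From that I would try to show \(\|\gamma_{m+1}-\gamma_m\|\to0\) summably. This uses the fact that the displacement of a vertex under cutting is controlled by the second differences and by \(c_m h_m\).
\end{itemize}
I expect this second case to be the delicate part, and I would first try to establish summability there.

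\textbf{Step 4: Hausdorff limit and areas.} Once \(\gamma_m\to\gamma_\infty\) uniformly, the images converge in the Hausdorff metric to the closed continuous curve \(\gamma_\infty\). For the areas I would write the change under one step exactly: \(A_{m+1}-A_m\) equals minus the sum of the cut-off triangles, which is
\[
A_{m+1}-A_m=-\tfrac{c_m^2}{2}B(P^{(m)}).
\]
I would then show that the oriented area, defined for the limit curve via winding numbers, is continuous under uniform convergence of closed curves of bounded variation. The lengths of the \(\gamma_m\) are nonincreasing, since corner cutting shortens the polygon by the triangle inequality, so they are uniformly bounded. The integral \(\tfrac12\int[\gamma,d\gamma]\) then converges by a Helly-type argument: uniform convergence plus uniformly bounded variation gives convergence of Stieltjes integrals. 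This yields \(A_\infty=\lim_m A_m\).
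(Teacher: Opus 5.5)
\textbf{There is a genuine gap.} Your proposal never establishes uniform convergence of \(\gamma_m\), and both Step 4 and the Hausdorff statement depend on it.

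\textbf{Case \(\sum c_m=\infty\).} You leave this case open. The tool you plan to use there is also false: edge differences do not contract. For a regular \(n\)-gon with unit edges, the differences are \(O(1/n)\). After one step, however, consecutive edges are \((1-2c_m)e_i\) and \(c_m(e_i+e_{i+1})\), and these differ by roughly \(|4c_m-1|\).

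\textbf{Case \(\sum c_m<\infty\).} This case rests on \(\|\gamma_{m+1}-\gamma_m\|_\infty\le c_mh_m\), which fails for equally spaced parameters. First fix your offset: it should be \((j-\tfrac12)/N_m\). With \((j+\tfrac12)/N_m\), the point \(p^{(m+1)}_{2i}\) lands at \((i+\tfrac14)/N_m\), which is before \(p^{(m)}_i\). After the fix, at the parameter of \(p^{(m+1)}_{2i}\) the old curve sits at the quarter point \(\tfrac34p^{(m)}_i+\tfrac14p^{(m)}_{i+1}\), whatever \(c_m\) is. The discrepancy is therefore \(|c_m-\tfrac14|\,|p^{(m)}_{i+1}-p^{(m)}_i|\), not \(O(c_mh_m)\).

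Worse, the claim is actually false here, not just unproved. The middle part of \([p_0,p_1]\) survives at level \(m\) as an edge of length \(|p_1-p_0|\prod_{j<m}(1-2c_j)\ge\delta>0\). Yet it occupies a parameter interval of length \(1/N_m\to0\). So equally spaced parametrizations cannot converge uniformly to any continuous curve.

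\textbf{Edge formula.} Your formula for the corner edge is wrong: it is \(c_m(p^{(m)}_{i+2}-p^{(m)}_i)\), so in fact \(h_{m+1}\le\max(2c_m,1-2c_m)\,h_m\). Every step contracts, but not uniformly when \(c_m\to0\) or \(c_m\to\tfrac12\). Any argument through summable rates must handle both regimes, and you do not address the second one at all.

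\textbf{The missing idea: monotonicity, which needs no rate.} This is what the paper uses.
\begin{itemize}
\item The midpoint \(s_i\) of each original edge lies on every \(P^{(m)}\), because the retained middle segment of an edge has the same midpoint.
\item So, in suitable affine coordinates, the piece of \(P^{(m)}\) between \(s_{i-1}\) and \(s_i\) is the graph of a concave piecewise-linear function \(f_{i,m}\) on \([0,1]\).
\item Chords of a concave graph lie below it, hence \(0\le f_{i,m+1}\le f_{i,m}\le f_{i,0}\).
\item The pointwise limit is concave, hence continuous in the interior. It is continuous at the endpoints by the squeeze with \(f_{i,0}\).
\item Dini's theorem then gives uniform convergence, which is exactly uniform convergence of the parametrization by the abscissa \(x\). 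Dominated convergence gives the areas.
\end{itemize}
This works for every sequence \(c_m\in(0,\tfrac12)\).

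Once uniform convergence is available, your Step 4 would be a valid alternative for the area part: uniformly bounded length plus uniform convergence gives convergence of \(\tfrac12\int[\gamma,d\gamma]\). Note also that with the paper's \(B\), the one-step increment is \(+\tfrac{c_m^2}{2}B(P^{(m)})\), not minus.
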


\begin{proof}
We prove the assertion locally, on the piece between \(s_{i-1}\) and \(s_i\).
If the points \(p_{i-1},p_i,p_{i+1}\) lie on one line, then all subsequent local
pieces also lie on this line, and the assertion is obvious. We therefore assume
that the corresponding triangle is nondegenerate.

Make an affine change of coordinates sending the segment \([s_{i-1},s_i]\) to
the interval \([0,1]\) on the \(x\)-axis and the vertex \(p_i\) to a point in
the upper half-plane. After this change each local piece of \(P^{(m)}\) is the
graph of a continuous piecewise-linear function
\(f_{i,m}\colon[0,1]\to\mathbb R\). Passing from level \(m\) to level
\(m+1\) only cuts corners of the previous graph. Hence
\[
    0\leq f_{i,m+1}(x)\leq f_{i,m}(x)\leq f_{i,0}(x)
    \qquad x\in[0,1].
\]
Therefore, for each \(x\), the limit
\[
    f_i(x)=\lim_{m\to\infty}f_{i,m}(x)
\]
exists.

\begin{figure}[ht]
  \centering
  \begin{tikzpicture}[
      x=5.6cm,
      y=2.3cm,
      line cap=round,
      axis/.style={->,line width=0.7pt,black!65},
      base/.style={line width=0.7pt,black!30},
      fzero/.style={line width=1.0pt,black!60},
      fone/.style={line width=1.1pt,blue!65!black},
      ftwo/.style={line width=1.1pt,red!65!black},
      flimit/.style={line width=1.2pt,green!45!black,dashed},
      point/.style={circle,fill=black,inner sep=1.1pt},
      label/.style={font=\small}
    ]
    \draw[axis] (-0.04,0) -- (1.08,0) node[right,label] {\(x\)};
    \draw[axis] (0,-0.06) -- (0,1.08) node[above,label] {\(y\)};
    \draw[base] (0,0) -- (1,0);

    \draw[fzero]
      (0,0) -- (0.5,1.0) -- (1,0);
    \draw[fone]
      (0,0) -- (0.25,0.5) -- (0.75,0.5) -- (1,0);
    \draw[ftwo]
      (0,0) -- (0.125,0.25) -- (0.375,0.5) -- (0.625,0.5)
      -- (0.875,0.25) -- (1,0);
    \draw[flimit]
      plot[domain=0:1,samples=80] (\x,{2*\x*(1-\x)});

    \node[point,label={[label]below left:\(s_{i-1}\)}] at (0,0) {};
    \node[point,label={[label]below:\(s_i\)}] at (1,0) {};
    \node[label] at (0.51,1.04) {\(f_{i,0}\)};
    \node[label,blue!65!black] at (0.80,0.54) {\(f_{i,1}\)};
    \node[label,red!65!black] at (0.64,0.62) {\(f_{i,2}\)};
    \node[label,green!45!black] at (0.50,0.36) {\(f_i\)};

    \draw[->,black!50,line width=0.7pt] (0.30,0.82) -- (0.30,0.56);
    \draw[->,black!50,line width=0.7pt] (0.30,0.49) -- (0.30,0.37);
    \node[label,align=center,black!60] at (0.16,0.67)
      {\(m\) grows};
  \end{tikzpicture}
  \caption{Local convergence of graphs for the example \(c_m\equiv 1/4\):
  \(f_{i,0}\) is the initial polyline, \(f_{i,1}\) and \(f_{i,2}\) are the first
  two corner-cutting steps, and the dashed curve is the limit graph.}
  \label{fig:local-graph-convergence}
\end{figure}

Moreover, the functions \(f_{i,m}\) are concave in the same sense as the
initial polyline on this triangle: each next function is obtained from the
previous one by finitely many cuts by straight lines. Hence the limit function
\(f_i\) is concave as well. Such a function is continuous in the interior of the
interval \cite{BoltyanskyYaglom1951}, and continuity at the endpoints follows
from the estimate
\[
    0\leq f_i(x)\leq f_{i,0}(x),
\]
since \(f_{i,0}(0)=f_{i,0}(1)=0\). Now the functions \(f_{i,m}\) are
continuous on the compact interval \([0,1]\) and decrease monotonically to the
continuous function \(f_i\). Hence, by Dini's theorem, the convergence is
uniform. Uniform convergence implies convergence of the graphs in the
Hausdorff metric. Thus each local limit piece is a continuous curve.
Neighboring pieces have common endpoints \(s_i\), and therefore finitely many
such pieces glue together to a continuous closed curve \(P^{(\infty)}\).

It remains to check convergence of areas. In the chosen affine coordinates, the
oriented area of the local piece closed by the segment \([s_{i-1},s_i]\) differs
from the integral
\[
    \int_0^1 f_{i,m}(x)\,dx
\]
only by a constant oriented factor, namely the determinant of the inverse affine
transformation. At the same time,
\[
    |f_{i,m}(x)|\leq f_{i,0}(x),
\]
and the function \(f_{i,0}\) is integrable. Therefore, by Lebesgue's dominated
convergence theorem,
\[
    \int_0^1 f_{i,m}(x)\,dx
    \longrightarrow
    \int_0^1 f_i(x)\,dx.
\]

The area of the whole polygon decomposes into the area of the fixed polygon
with vertices \(s_0,\ldots,s_{n-1}\) and finitely many such local contributions.
The limit curve has the same decomposition, only with the functions \(f_i\).
Since there are finitely many local pieces, we may pass to the limit in each of
them and sum the results. Consequently,
\[
    A(P^{(m)})\longrightarrow A(P^{(\infty)}).
\]
We denote this limiting oriented area by \(A_\infty\).
\end{proof}

The next simple lemma will be used in the examples, when we compare a known
smooth curve with the limit curve obtained by corner cutting.

\begin{lemma}[Comparison on a dense set]\label{lem:dense-comparison}
Let \(\gamma,\widetilde\gamma\colon[a,b]\to\mathbb R^2\) be two continuous
curves. If they coincide on an everywhere dense set \(D\subset[a,b]\), then
they coincide on the whole interval.

One convenient way to obtain such a set \(D\) is the following. Let \(\Pi_m\) be
a nested sequence of partitions of the interval \([a,b]\), and let \(|\Pi_m|\)
be the length of the largest subinterval of the partition. If
\[
    |\Pi_m|\longrightarrow 0,
\]
then the set of all points of all partitions
\[
    D=\bigcup_{m\geq0}\Pi_m
\]
is everywhere dense in \([a,b]\). In particular, it is enough that, when passing
from \(\Pi_m\) to \(\Pi_{m+1}\), each old subinterval is replaced by
subintervals of length at most \(\lambda_m\) times its length, where
\(0<\lambda_m<1\) and
\[
    \prod_{j=0}^{m-1}\lambda_j\longrightarrow0.
\]
For example, this is certainly satisfied if each subinterval is divided by one
point in the ratio \(\theta_m:(1-\theta_m)\) and all \(\theta_m\) stay away from
\(0\) and \(1\):
\[
    \eta\leq\theta_m\leq1-\eta
    \qquad\text{for some }\eta>0.
\]
\end{lemma}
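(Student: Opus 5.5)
The plan is to prove the two parts of Lemma~\ref{lem:dense-comparison} separately: first the uniqueness statement, then the sufficient conditions for density, each reduced to the previous one.

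\textbf{Uniqueness on a dense set.} I would consider the difference \(h=\gamma-\widetilde\gamma\colon[a,b]\to\mathbb R^2\), which is continuous and vanishes on \(D\). Fix \(t\in[a,b]\). Since \(D\) is everywhere dense, there is a sequence \(t_k\in D\) with \(t_k\to t\). Continuity of \(h\) gives \(h(t)=\lim_k h(t_k)=0\). Thus the two curves agree at every point. Equivalently, the zero set \(h^{-1}(0)\) is closed and contains \(D\), so it contains \(\overline D=[a,b]\).

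\textbf{Density of the union of partitions.} Let \(t\in[a,b]\) and \(\varepsilon>0\). Choose \(m\) with \(|\Pi_m|<\varepsilon\). The point \(t\) lies in some closed subinterval \([u,v]\) of \(\Pi_m\), and its endpoints \(u,v\) belong to \(\Pi_m\subset D\). Then \(|t-u|\le v-u\le|\Pi_m|<\varepsilon\), so \(D\) meets every neighborhood of \(t\).

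\textbf{The ratio conditions.} For the contraction condition, I would argue by induction on \(m\). Every subinterval of \(\Pi_m\) lies inside a chain of ancestors going back to \(\Pi_0\). Each passage from one level to the next multiplies the length by at most \(\lambda_j\), so every subinterval of \(\Pi_m\) has length at most \((b-a)\prod_{j<m}\lambda_j\). This bound tends to \(0\), hence \(|\Pi_m|\to0\). For the one-point division in ratio \(\theta_m:(1-\theta_m)\), each new piece has relative length \(\theta_m\) or \(1-\theta_m\). Both are at most \(1-\eta\), so one may take \(\lambda_m=1-\eta<1\), and then \((1-\eta)^m\to0\).

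No step is a real obstacle. The only point needing care is the bookkeeping in the induction: one must note that the partitions are nested, so each new subinterval really comes from a single old one. This is exactly what makes the product bound hold.
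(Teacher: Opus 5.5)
Your proposal is correct and follows essentially the same route as the paper: density of \(D\) comes from the mesh \(|\Pi_m|\to0\), and the bound \(|\Pi_m|\le(b-a)\prod_{j<m}\lambda_j\) comes from the nesting of the partitions. The differences are cosmetic. You prove the closed-zero-set fact that the paper simply takes as standard, and you show density by approximating each point with a partition endpoint rather than by the paper's open-interval contradiction. You also explicitly check the \(\theta_m\) case with \(\lambda_m=1-\eta\), which the paper leaves implicit.
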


\begin{proof}
We take as standard the fact that continuous maps coinciding on an everywhere
dense set coincide everywhere. It remains only to check that the set of
partition points is dense. Take any open interval \((u,v)\subset[a,b]\). If
\(|\Pi_m|<v-u\), then \((u,v)\) must contain a point of the partition \(\Pi_m\);
otherwise the entire interval \((u,v)\) would lie inside one subinterval of the
partition of length at least \(v-u\), contradicting the choice of \(m\). Hence
every open interval meets \(D\), so \(D\) is everywhere dense.

Finally, if every new subinterval has length at most \(\lambda_m\) times the
length of the old one, then
\[
    |\Pi_m|\leq
    (b-a)\prod_{j=0}^{m-1}\lambda_j.
\]
Thus under the stated condition \(|\Pi_m|\to0\).
\end{proof}

\section{The One-Step Area Increment}

We now compute how the oriented area changes under one corner-cutting step.

Let
\[
    Q=(q_0,\ldots,q_{N-1})
\]
be an oriented polygon, and let \(T_cQ\) be the polygon obtained from it by
corner cutting with coefficient \(c\), where
\[
    0<c<\frac12.
\]
Denote the vertices of the new polygon by
\[
    r_0,\ldots,r_{2N-1}.
\]
By definition of the scheme,
\[
    r_{2i}=(1-c)q_i+cq_{i+1},
\]
\[
    r_{2i+1}=cq_i+(1-c)q_{i+1}.
\]

\begin{lemma}
For every oriented polygon \(Q\),
\[
    A(T_cQ)
    =
    A(Q)+\frac{c^2}{2}B(Q).
\]
\end{lemma}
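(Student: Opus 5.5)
Geometrically, the new polygon \(T_cQ\) arises from \(Q\) by cutting off, at each vertex \(q_i\), the triangle with vertices \(r_{2i-1}, q_i, r_{2i}\). The cleanest route, however, is a direct algebraic check with the shoelace formula, and we plan to do that. Translation invariance of oriented area will be used only as a guide; the computation itself is carried out in absolute coordinates.

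We write
\[
  2A(T_cQ)=\sum_{i}\bigl([r_{2i},r_{2i+1}]+[r_{2i+1},r_{2i+2}]\bigr)
\]
and split it into the contributions of the edges lying inside old edges and of the cut edges.

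For the first kind, bilinearity and antisymmetry give
\[
  [r_{2i},r_{2i+1}]=\bigl((1-c)^2-c^2\bigr)[q_i,q_{i+1}]=(1-2c)[q_i,q_{i+1}].
\]
For the cut edge near \(q_{i+1}\), with \(r_{2i+1}=cq_i+(1-c)q_{i+1}\) and \(r_{2i+2}=(1-c)q_{i+1}+cq_{i+2}\), expansion yields
\[
  [r_{2i+1},r_{2i+2}]
  =c(1-c)[q_i,q_{i+1}]+c^2[q_i,q_{i+2}]+c(1-c)[q_{i+1},q_{i+2}],
\]
since \([q_{i+1},q_{i+1}]=0\).

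Summing over \(i\) cyclically, the coefficient of \(\sum_i[q_i,q_{i+1}]\) becomes \((1-2c)+2c(1-c)=1-2c^2\). Hence
\[
  2A(T_cQ)=2A(Q)-2c^2A(Q)\cdot 2/2\cdot\ldots
\]
More precisely,
\[
  2A(T_cQ)=(1-2c^2)\sum_i[q_i,q_{i+1}]+c^2\sum_i[q_{i-1},q_{i+1}].
\]

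It remains to express \(B(Q)\) in the same terms. Expanding,
\[
  [q_{i-1}-q_i,q_{i+1}-q_i]=[q_{i-1},q_{i+1}]-[q_{i-1},q_i]-[q_i,q_{i+1}].
\]
Summing cyclically gives
\[
  B(Q)=\sum_i[q_{i-1},q_{i+1}]-2\sum_i[q_i,q_{i+1}].
\]
Substituting, \(2A(T_cQ)=\sum_i[q_i,q_{i+1}]+c^2B(Q)\). Dividing by two gives the claim.

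The main obstacle is only bookkeeping: the cyclic index shifts have to be done correctly, so that \(\sum_i[q_{i-1},q_i]\) is recognized as equal to \(\sum_i[q_i,q_{i+1}]\), and the cross terms must carry the right signs.

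As a sanity check, the geometric picture gives the same answer. Each removed triangle \(r_{2i-1},q_i,r_{2i}\) has vectors \(c(q_{i-1}-q_i)\) and \(c(q_{i+1}-q_i)\) from \(q_i\). Its oriented area in the traversal order is \(-\tfrac{c^2}{2}[q_{i-1}-q_i,q_{i+1}-q_i]\), and it is subtracted, which reproduces \(+\tfrac{c^2}{2}B(Q)\). We would mention this interpretation after the computation.
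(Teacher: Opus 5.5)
Your proposal is correct and is essentially the paper's own proof: the same split of the shoelace sum into \([r_{2i},r_{2i+1}]\) and \([r_{2i+1},r_{2i+2}]\), the same expansions leading to the coefficient \(1-2c^2\), and the same cyclic identity \(B(Q)=\sum_i[q_{i-1},q_{i+1}]-2\sum_i[q_i,q_{i+1}]\); the paper also follows the lemma with the same cut-off-triangle interpretation. Delete the stray line \(2A(T_cQ)=2A(Q)-2c^2A(Q)\cdot 2/2\cdot\ldots\), which is meaningless, and keep only the ``more precisely'' display that comes after it.
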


\begin{proof}
By the oriented area formula,
\[
    2A(T_cQ)
    =
    \sum_{i=0}^{N-1}
    \bigl([r_{2i},r_{2i+1}]+[r_{2i+1},r_{2i+2}]\bigr).
\]
Substitute the expressions for the new vertices.

First,
\[
    [r_{2i},r_{2i+1}]
    =
    [(1-c)q_i+cq_{i+1},\;cq_i+(1-c)q_{i+1}].
\]
By bilinearity and skew-symmetry of the determinant,
\[
    [r_{2i},r_{2i+1}]
    =
    (1-2c)[q_i,q_{i+1}].
\]

Next,
\[
    [r_{2i+1},r_{2i+2}]
    =
    [cq_i+(1-c)q_{i+1},\;(1-c)q_{i+1}+cq_{i+2}].
\]
Expanding the determinant gives
\[
    [r_{2i+1},r_{2i+2}]
    =
    c(1-c)[q_i,q_{i+1}]
    +c^2[q_i,q_{i+2}]
    +c(1-c)[q_{i+1},q_{i+2}].
\]

Now sum over all \(i\). After an index shift, the last sum coincides with
\(\sum_i [q_i,q_{i+1}]\). Therefore
\[
    2A(T_cQ)
    =
    (1-2c^2)\sum_{i=0}^{N-1}[q_i,q_{i+1}]
    +
    c^2\sum_{i=0}^{N-1}[q_i,q_{i+2}].
\]
Since
\[
    2A(Q)=\sum_{i=0}^{N-1}[q_i,q_{i+1}],
\]
we obtain
\[
    2A(T_cQ)-2A(Q)
    =
    c^2
    \left(
        \sum_{i=0}^{N-1}[q_i,q_{i+2}]
        -
        2\sum_{i=0}^{N-1}[q_i,q_{i+1}]
    \right).
\]

It remains to observe that the expression in parentheses is \(B(Q)\). Indeed,
\[
    B(Q)
    =
    \sum_{i=0}^{N-1}
    [q_{i-1}-q_i,\;q_{i+1}-q_i].
\]
Expanding the determinant, we get
\[
    [q_{i-1}-q_i,\;q_{i+1}-q_i]
    =
    [q_{i-1},q_{i+1}]
    -
    [q_{i-1},q_i]
    -
    [q_i,q_{i+1}].
\]
After summation and an index shift this becomes
\[
    B(Q)
    =
    \sum_{i=0}^{N-1}[q_i,q_{i+2}]
    -
    2\sum_{i=0}^{N-1}[q_i,q_{i+1}].
\]
Consequently,
\[
    2A(T_cQ)-2A(Q)=c^2B(Q),
\]
that is,
\[
    A(T_cQ)
    =
    A(Q)+\frac{c^2}{2}B(Q).
\]
\end{proof}

The formula obtained has a simple geometric meaning. Under corner cutting, the
area changes by the sum of the areas of the small triangles cut off near the old
vertices. The sign of this correction depends on the orientation of the
traversal. For example, for a convex polygon traversed counterclockwise, the
quantity \(B(Q)\) is negative, and therefore the area decreases.

For later use we need one more identity: how the quantity \(B(Q)\) itself
changes under corner cutting.

\begin{lemma}
For every oriented polygon \(Q\),
\[
    B(T_cQ)=2c(1-2c)B(Q).
\]
\end{lemma}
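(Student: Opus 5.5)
The plan is a direct computation that runs parallel to the proof of the previous lemma. By that proof, $B$ can be written purely in terms of the shoelace sums:
\[
    B(Q)=\sum_{i}[q_i,q_{i+2}]-2\sum_i[q_i,q_{i+1}].
\]
So it suffices to compute the two sums $S_1=\sum_j[r_j,r_{j+1}]$ and $S_2=\sum_j[r_j,r_{j+2}]$ for the new polygon $T_cQ$. We then form $B(T_cQ)=S_2-2S_1$ and compare the result with $2c(1-2c)\bigl(\sum_i[q_i,q_{i+2}]-2\sum_i[q_i,q_{i+1}]\bigr)$.

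The sum $S_1=2A(T_cQ)$ is already known from the previous lemma:
\[
    S_1=(1-2c^2)\sum_i[q_i,q_{i+1}]+c^2\sum_i[q_i,q_{i+2}].
\]
For $S_2$ we split by parity into $\sum_i[r_{2i},r_{2i+2}]$ and $\sum_i[r_{2i+1},r_{2i+3}]$. Substituting $r_{2i}=(1-c)q_i+cq_{i+1}$ and $r_{2i+2}=(1-c)q_{i+1}+cq_{i+2}$ and expanding bilinearly gives
\[
    (1-c)^2[q_i,q_{i+1}]+c(1-c)[q_i,q_{i+2}]+c^2[q_{i+1},q_{i+2}],
\]
because $[q_{i+1},q_{i+1}]=0$. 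The odd terms are handled in the same way, with the roles of $c$ and $1-c$ swapped: we get $c^2[q_i,q_{i+1}]+c(1-c)[q_i,q_{i+2}]+(1-c)^2[q_{i+1},q_{i+2}]$. After summing over $i$ with cyclic index shifts, we obtain
\[
    S_2=2\bigl((1-c)^2+c^2\bigr)\sum_i[q_i,q_{i+1}]+2c(1-c)\sum_i[q_i,q_{i+2}].
\]

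Finally, combine the two sums:
\[
    S_2-2S_1=\bigl(2(1-c)^2+2c^2-2+4c^2\bigr)\sum_i[q_i,q_{i+1}]+\bigl(2c(1-c)-2c^2\bigr)\sum_i[q_i,q_{i+2}].
\]
The first coefficient simplifies to $-4c+8c^2=-2\cdot 2c(1-2c)$, and the second to $2c-4c^2=2c(1-2c)$. Hence $B(T_cQ)=2c(1-2c)B(Q)$.

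The only real obstacle is bookkeeping. We have to make sure the cyclic index shifts are legitimate: indices in $T_cQ$ are taken modulo $2N$, and the relevant pairs $r_{2N-1},r_{2N+1}=r_1$ wrap around consistently. We also have to keep the signs from skew-symmetry straight. One could alternatively verify the identity locally per vertex, but the global sum approach avoids that.
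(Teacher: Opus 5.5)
Your argument is correct. The expansions of $[r_{2i},r_{2i+2}]$ and $[r_{2i+1},r_{2i+3}]$ are right, the resulting formula for $S_2$ is right, and the final coefficients $-4c+8c^2$ and $2c-4c^2$ check out. The identity $B=\sum_i[q_i,q_{i+2}]-2\sum_i[q_i,q_{i+1}]$ from the previous proof holds for any closed polygon, in particular for $T_cQ$.

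It is, however, a different route from the paper's. The paper works vertex by vertex. It computes the two edge vectors at $r_{2i}$, namely $c(q_{i-1}-q_{i+1})$ and $(1-2c)(q_{i+1}-q_i)$, and at $r_{2i+1}$, namely $(1-2c)(q_i-q_{i+1})$ and $c(q_{i+2}-q_i)$. It then matches each resulting determinant with a summand of $B(Q)$ by discarding a zero determinant. In effect it shows that both endpoints $r_{2i-1},r_{2i}$ of the segment cutting off the corner $q_i$ carry exactly $c(1-2c)[q_{i-1}-q_i,\,q_{i+1}-q_i]$.

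Your approach instead passes through the two cyclic sums $\sum_i[q_i,q_{i+1}]$ and $\sum_i[q_i,q_{i+2}]$. It recycles the previous lemma (both the expression for $B$ and $S_1=2A(T_cQ)$), so only one new bilinear expansion is needed. It also shows $T_c$ acting as a linear map on this pair of sums, with $B$ the combination that is simply rescaled by $2c(1-2c)$.

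What you give up is locality. The individual terms $[r_j,r_{j+1}]$ and $[r_j,r_{j+2}]$ depend on the choice of origin and only become translation-invariant after the full cyclic sum. So your proof gives the identity only for the total over a closed polygon. The paper's termwise version is what supports two later uses: reading $C(c_\bullet)$ as the area cut off inside a single standard angle, and applying the formula to the open hyperbolic arc, where cyclic index shifts are not available.
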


\begin{proof}
Let, as above,
\[
    r_{2i}=(1-c)q_i+cq_{i+1},
    \qquad
    r_{2i+1}=cq_i+(1-c)q_{i+1}.
\]
The sum \(B(T_cQ)\) contains terms of two types: those around vertices
\(r_{2i}\) and those around vertices \(r_{2i+1}\).

For the vertex \(r_{2i}\), we have
\[
    r_{2i-1}-r_{2i}
    =
    c(q_{i-1}-q_{i+1}),
\]
\[
    r_{2i+1}-r_{2i}
    =
    (1-2c)(q_{i+1}-q_i).
\]
Hence the corresponding summand is
\[
    c(1-2c)
    [q_{i-1}-q_{i+1},\;q_{i+1}-q_i].
\]

For the vertex \(r_{2i+1}\), similarly,
\[
    r_{2i}-r_{2i+1}
    =
    (1-2c)(q_i-q_{i+1}),
\]
\[
    r_{2i+2}-r_{2i+1}
    =
    c(q_{i+2}-q_i).
\]
Therefore the second summand is
\[
    c(1-2c)
    [q_i-q_{i+1},\;q_{i+2}-q_i].
\]

Summing over all \(i\), we obtain
\[
    B(T_cQ)
    =
    c(1-2c)
    \sum_i
    [q_{i-1}-q_{i+1},\;q_{i+1}-q_i]
\]
\[
    +
    c(1-2c)
    \sum_i
    [q_i-q_{i+1},\;q_{i+2}-q_i].
\]
The first sum is equal to \(B(Q)\), because
\[
    q_{i-1}-q_{i+1}
    =
    (q_{i-1}-q_i)+(q_i-q_{i+1}),
\]
and the additional term gives a zero determinant.

The second sum is also equal to \(B(Q)\) after an index shift. Therefore
\[
    B(T_cQ)=c(1-2c)B(Q)+c(1-2c)B(Q),
\]
that is,
\[
    B(T_cQ)=2c(1-2c)B(Q).
\]
\end{proof}

Thus one corner-cutting step gives two simple recurrence formulas:
\[
    A(T_cQ)=A(Q)+\frac{c^2}{2}B(Q),
\]
\[
    B(T_cQ)=2c(1-2c)B(Q).
\]
Everything is now ready for the main result.

\section{The Main Area Formula}

We now apply the formulas of the previous section to the whole sequence of
polygons
\[
    P^{(0)},P^{(1)},P^{(2)},\ldots
\]
obtained by successive corner cuts with coefficients
\[
    c_0,c_1,c_2,\ldots .
\]

For brevity, set
\[
    A_m=A(P^{(m)}),
    \qquad
    B_m=B(P^{(m)}).
\]
Then the previous section gives
\[
    A_{m+1}
    =
    A_m+\frac{c_m^2}{2}B_m,
\]
and
\[
    B_{m+1}
    =
    2c_m(1-2c_m)B_m.
\]

From the second formula we immediately obtain
\[
    B_m
    =
    \left(
        \prod_{j=0}^{m-1}2c_j(1-2c_j)
    \right)B_0.
\]
In particular, for \(m=0\) this formula gives \(B_0=B(P)\).

Substituting this expression into the recurrence for the area, we get
\[
    A_{m+1}-A_m
    =
    \frac{c_m^2}{2}
    \left(
        \prod_{j=0}^{m-1}2c_j(1-2c_j)
    \right)B(P).
\]
Thus after \(M\) steps,
\[
    A_M
    =
    A(P)
    +
    \left(
        \sum_{m=0}^{M-1}
        \frac{c_m^2}{2}
        \prod_{j=0}^{m-1}2c_j(1-2c_j)
    \right)
    B(P).
\]

Now introduce the constant
\[
    C(c_\bullet)
    =
    \sum_{m=0}^{\infty}
    \frac{c_m^2}{2}
    \prod_{j=0}^{m-1}2c_j(1-2c_j).
\]
It depends only on the sequence of cutting coefficients
\[
    c_\bullet=(c_0,c_1,c_2,\ldots)
\]
and not on the initial polygon.

The series defining \(C(c_\bullet)\) converges. Indeed, since
\(0<c_m<\frac12\),
\[
    0<2c_m(1-2c_m)\leq \frac14.
\]
Moreover,
\[
    0<c_m^2<\frac14.
\]
Therefore
\[
    0
    \leq
    \frac{c_m^2}{2}
    \prod_{j=0}^{m-1}2c_j(1-2c_j)
    \leq
    \frac18\left(\frac14\right)^m,
\]
and the series converges by comparison with a geometric progression.

\begin{theorem}
Let
\[
    P=(p_0,\ldots,p_{n-1})
\]
be the initial oriented polygon in the plane, and let
\[
    c_0,c_1,c_2,\ldots,\qquad 0<c_m<\frac12,
\]
be a sequence of cutting coefficients. Then the oriented area of the limit curve
is
\[
\boxed{
    A_\infty
    =
    A(P)
    +
    C(c_\bullet)
    \sum_{i=0}^{n-1}
    [p_{i-1}-p_i,\;p_{i+1}-p_i]
}.
\]
\end{theorem}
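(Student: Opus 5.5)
The plan is to combine the two one-step recurrences with the convergence lemma; essentially all the work has already been done, and what remains is bookkeeping and a limit passage. First, using the one-step lemmas with \(Q=P^{(m)}\) and \(c=c_m\), I obtain \(A_{m+1}=A_m+\tfrac{c_m^2}{2}B_m\) and \(B_{m+1}=2c_m(1-2c_m)B_m\). By induction on \(m\),
\[
    B_m=\Bigl(\prod_{j=0}^{m-1}2c_j(1-2c_j)\Bigr)B(P).
\]
Telescoping the area recurrence then gives the finite-step identity
\[
    A_M=A(P)+S_M\,B(P),
    \qquad
    S_M=\sum_{m=0}^{M-1}\frac{c_m^2}{2}\prod_{j=0}^{m-1}2c_j(1-2c_j),
\]
exactly as displayed before the theorem.

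Second, I let \(M\to\infty\). The partial sums \(S_M\) converge to \(C(c_\bullet)\) by the geometric domination \(0\le\) term \(\le \tfrac18(1/4)^m\) already noted. By the first lemma of the section on the limit curve, \(A_M\to A_\infty\), the oriented area of \(P^{(\infty)}\). Since \(A(P)\) and \(B(P)\) do not depend on \(M\), passing to the limit in the finite identity yields
\[
    A_\infty=A(P)+C(c_\bullet)B(P).
\]
Unfolding the definition of \(B(P)\) gives the boxed formula.

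Finally, I check that this constant agrees with the form stated in the introduction. Since \(\prod_{j<k}2c_j(1-2c_j)=2^k\prod_{j<k}c_j(1-2c_j)\), the two expressions for \(C(c_\bullet)\) coincide.

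The only step with genuine content is the identification of \(\lim A_m\) with the area of the limit curve. That is supplied by the convergence lemma (monotone concave local graphs, Dini's theorem, dominated convergence), so I will simply invoke it. If one wanted independence from that lemma, the point to watch is that the local decomposition into the fixed polygon \(s_0,\ldots,s_{n-1}\) plus pieces is preserved at every level. This holds because each \(s_i\) stays a vertex-free point on the curve for all \(m\): midpoints of edges are fixed by the symmetric cutting rule.
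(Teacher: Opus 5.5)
Your proposal is correct and is essentially the paper's argument. The paper's one-line proof combines the same ingredients you use: the two one-step recurrences, the finite identity \(A_M=A(P)+S_M\,B(P)\) together with the geometric bound that makes the series for \(C(c_\bullet)\) converge, and the convergence lemma \(A_M\to A_\infty\).
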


\begin{proof}
This follows from the statements proved above.
\end{proof}

Thus all dependence on the initial polygon is contained in two quantities: its
oriented area \(A(P)\) and the local sum
\[
    \sum_{i=0}^{n-1}
    [p_{i-1}-p_i,\;p_{i+1}-p_i].
\]
All dependence on the chosen corner-cutting scheme is contained in one number
\(C(c_\bullet)\).

\begin{remark}
The theorem is useful not only as a way to compute the area of the limit curve.
It can also be read in reverse. Suppose that, for some sequence of coefficients
\(c_\bullet\), we can independently prove that the limit curve coincides with a
known parametrized curve \(\gamma\). Then its area can be found in the usual
way, for instance by the integral
\[
    \frac12\int [\gamma(t),\gamma'(t)]\,dt
\]
or, in the case of a graph, as the area under the graph. On the other hand, the
same area is expressed by the theorem through the constant \(C(c_\bullet)\), and
this constant is the series
\[
    C(c_\bullet)
    =
    \sum_{m=0}^{\infty}
    \frac{c_m^2}{2}
    \prod_{j=0}^{m-1}2c_j(1-2c_j).
\]
This produces a relation between the usual integral giving the area of the known
curve and a series built from the cutting coefficients. Thus in the examples
below we shall be interested precisely in curves for which both tasks can be
carried out: identifying the limit curve explicitly and choosing the sequence of
coefficients \(c_m\) explicitly.
\end{remark}

The same constant has a local geometric interpretation. Consider the standard
angle with vertex at the origin and neighboring points
\[
    (0,1),\qquad (0,0),\qquad (1,0).
\]
For it the local determinant is
\[
    [(0,1),(1,0)]=-1.
\]
By the one-step formula, the oriented area changes by \(-c_0^2/2\), so the
ordinary positive area of the first cut-off triangle is \(c_0^2/2\). After the
first step the same local determinant is multiplied by \(2c_0(1-2c_0)\), after
the second step by another factor \(2c_1(1-2c_1)\), and so on. Therefore the
full positive area that the limit curve cuts off from the standard angle is
\[
    \sum_{m=0}^{\infty}
    \frac{c_m^2}{2}
    \prod_{j=0}^{m-1}2c_j(1-2c_j)
    =
    C(c_\bullet).
\]
\begin{wrapfigure}[10]{r}{0.31\textwidth}
  \vspace{-1.2\baselineskip}
  \centering
  \begin{tikzpicture}[
      x=3.4cm,
      y=3.4cm,
      line cap=round,
      point/.style={circle,fill=black,inner sep=1pt},
      cutpoint/.style={circle,fill=white,draw=black,inner sep=1pt},
      angle/.style={line width=0.85pt,black!65},
      curve/.style={line width=1pt,blue!70!black},
      stripe/.style={line width=0.28pt,blue!35},
      every label/.append style={font=\scriptsize}
    ]
    \fill[blue!14]
      (0,0) -- (0,0.5)
      plot[domain=0:1,samples=80,variable=\x]
        ({0.5*\x*\x},{0.5*(1-\x)*(1-\x)})
      -- (0.5,0) -- cycle;
    \foreach \s in {0.12,0.24,0.36,0.48,0.60,0.72,0.84} {
      \draw[stripe]
        ({0.5*\s*\s},0) -- ({0.5*\s*\s},{0.5*(1-\s)*(1-\s)});
    }

    \draw[angle] (0,1) -- (0,0) -- (1,0);
    \draw[curve,domain=0:1,samples=100,variable=\x]
      plot ({0.5*\x*\x},{0.5*(1-\x)*(1-\x)});

    \node[point,label=left:{\((0,1)\)}] at (0,1) {};
    \node[point,label=below left:{\((0,0)\)}] at (0,0) {};
    \node[point,label=below:{\((1,0)\)}] at (1,0) {};
    \node[cutpoint,label=left:{\(P_0\)}] at (0,0.5) {};
    \node[cutpoint,label=below:{\(P_2\)}] at (0.5,0) {};
    \node[blue!60!black,font=\scriptsize] at (0.19,0.16) {\(C(1/4)\)};
  \end{tikzpicture}
  \vspace{-0.5\baselineskip}
\end{wrapfigure}
For example, for the stationary choice \(c_m=1/4\), the local limit curve is a
parabola; the shaded region on the right is the area under this parabola inside
the standard angle, and its area is precisely \(C(1/4)\). In other words, in the
standard angle the constant \(C(c_\bullet)\) is literally the area of the local
cut. For an arbitrary vertex, this reduces to the standard angle by an affine
transformation sending the coordinate rays to the two adjacent edges. Oriented
area is multiplied by the corresponding determinant under such a transformation,
so in the general formula the same universal constant stands in front of the
local factor \([p_{i-1}-p_i,\;p_{i+1}-p_i]\) with the appropriate orientation
sign. This explains why the same constant can be found either as the sum of the
areas of all generations of cut-off triangles or through the area of a previously
recognized limit curve in the standard angle.

\section{Examples and Experiments}

Finally, let us see what the formula gives in concrete examples. We begin with
the stationary scheme, in which the cutting coefficient does not change from
step to step.

\subsection{de Rham Curves}

In this note, by de Rham curves we mean the stationary case of the scheme
described above: the same cutting coefficient is used at every step. In other
words,
\[
c_m=c,
\qquad
0<c<\frac12.
\]
Then for all \(m\geq 0\),
\[
B_m=\bigl(2c(1-2c)\bigr)^m B(P).
\]
Consequently,
\[
A_{m+1}-A_m
=
\frac{c^2}{2}
\bigl(2c(1-2c)\bigr)^m B(P).
\]

Thus the changes of area form a geometric progression. After \(M\) steps,
\[
A_M
=
A(P)
+
\frac{c^2}{2}
\sum_{m=0}^{M-1}
\bigl(2c(1-2c)\bigr)^m B(P).
\]
Passing to the limit and summing this progression gives
\[
A_\infty
=
A(P)
+
\frac{c^2}{2\bigl(1-2c+4c^2\bigr)}B(P).
\]

Thus in the stationary case the constant from the main theorem is
\[
C(c_\bullet)
=
\frac{c^2}{2\bigl(1-2c+4c^2\bigr)}.
\]
Therefore the area formula becomes
\[
A_\infty
=
A(P)
+
\frac{c^2}{2\bigl(1-2c+4c^2\bigr)}
\sum_{i=0}^{n-1}
[p_{i-1}-p_i,\;p_{i+1}-p_i].
\]

The same geometric progression gives the exact error formula:
\[
A_\infty-A_M
=
\frac{c^2\bigl(2c(1-2c)\bigr)^M}
     {2\bigl(1-2c+4c^2\bigr)}
B(P).
\]
Thus the areas converge geometrically to the limiting area.

The most famous stationary case is obtained for
\[
c=\frac14.
\]
It is called the Chaikin scheme.

General results on convergence of corner-cutting procedures can be found in de
Boor \cite{deBoor1987}. The classical results of de Rham \cite{deRham1956} show
that for small cutting coefficients these curves are indeed first-order smooth:
for \(0<c<1/3\) the limit curve is differentiable, and in the modern formulation
the boundary case \(c=1/3\) also remains \(C^1\), although the derivative has
zero Holder exponent. At the same time, the case \(c=1/4\) is exceptional: it
produces a spline made of parabolic arcs. For other values of \(c\), the curve
is not piecewise analytic; in particular, among these curves an analytic spline
appears only in the Chaikin case.

\subsection{Parabolas and the Chaikin Scheme}

The best-known stationary case is obtained for
\[
c=\frac14.
\]
It was proposed by George Chaikin in 1974 as a fast algorithm for constructing
curves from a given control polygon \cite{Chaikin1974}. The original emphasis
was not on an analytic description of the limit curve, but on the computational
simplicity of the procedure: the algorithm was recursive and used only simple
operations, convenient for the computer hardware of the time.

On each edge with endpoints \(p_i\) and \(p_{i+1}\), two new points are
constructed:
\[
\frac34p_i+\frac14p_{i+1},
\qquad
\frac14p_i+\frac34p_{i+1}.
\]
In other words, at each step the middle half of every edge is retained, while
corners near the old vertices are cut off.

Soon after the algorithm appeared, Riesenfeld showed that its limit curves are
quadratic splines and are closely related to uniform quadratic \(B\)-splines
\cite{Riesenfeld1975,LaneRiesenfeld1980}. In particular, each local piece of the
limit curve is a quadratic Bezier curve, and hence, in the nondegenerate case,
is an arc of a parabola.

Let us spell out this terminology. A quadratic Bezier curve~\cite{Farin2002}
with control points \(P_0,P_1,P_2\) is given by
\[
    \beta(t)=(1-t)^2P_0+2t(1-t)P_1+t^2P_2,
    \qquad 0\leq t\leq 1.
\]
This is not a new geometric object, but a convenient way to write an arc of a
parabola through its control triangle. Indeed,
\[
\beta(t)
=
P_0+2t(P_1-P_0)+t^2(P_0-2P_1+P_2).
\]
If the points \(P_0,P_1,P_2\) are not collinear, then in affine coordinates with
basis vectors \(P_1-P_0\) and \(P_0-2P_1+P_2\), this curve has the form
\[
    (x,y)=(2t,t^2),
\]
that is, it is the parabola \(y=x^2/4\). If the control points are collinear, one
gets a degenerate case: a segment or a point. Thus in our local nondegenerate
case the words ``quadratic Bezier curve'' can simply be read as ``a parabolic
arc given by a control triangle''.

The main geometric property needed for the Chaikin scheme is shown in
Figure~\ref{fig:bezier-parabola-tangent}. Let
\[
    Q=(1-t)P_0+tP_1,
    \qquad
    M=(1-t)P_1+tP_2.
\]
Then
\[
    T=(1-t)Q+tM=\beta(t),
    \qquad
    \beta'(t)=2(M-Q).
\]
Hence the segment \(QM\) is tangent to the parabola at the point \(T\), and the
points \(Q\), \(M\), and \(T\) divide their respective segments in the same
ratio. The same construction splits the original arc into two smaller Bezier
arcs: the left one has control points \(P_0,Q,T\), and the right one has control
points \(T,M,P_2\).

\begin{figure}[ht]
  \centering
  \begin{tikzpicture}[
      x=1cm,
      y=1cm,
      line cap=round,
      point/.style={circle,fill=black,inner sep=1.35pt},
      marked/.style={circle,fill=white,draw=black,inner sep=1.35pt},
      control/.style={line width=0.9pt,black!35},
      tangent/.style={line width=1.1pt,red!70!black},
      curve/.style={line width=1.2pt,blue!70!black},
      every label/.append style={font=\small}
    ]
    \coordinate (P0) at (0,0);
    \coordinate (P1) at (2,2.2);
    \coordinate (P2) at (5,0);
    \coordinate (Q) at (1.2,1.32);
    \coordinate (M) at (3.8,0.88);
    \coordinate (T) at (2.76,1.056);

    \draw[control] (P0) -- (P1) -- (P2);
    \draw[curve,domain=0:1,samples=100,variable=\x]
      plot ({4*\x+\x*\x},{4.4*\x*(1-\x)});
    \draw[tangent] (Q) -- (M);

    \node[point,label=below left:{\(P_0\)}] at (P0) {};
    \node[point,label=above:{\(P_1\)}] at (P1) {};
    \node[point,label=below right:{\(P_2\)}] at (P2) {};
    \node[marked,label=above left:{\(Q\)}] at (Q) {};
    \node[marked,label=above:{\(T\)}] at (T) {};
    \node[marked,label=above right:{\(M\)}] at (M) {};
  \end{tikzpicture}
  \caption{A standard property of a parabola in the form of a quadratic Bezier
  curve: the points \(Q\) and \(M\) divide the sides of the control triangle in
  the same ratio, and the segment \(QM\) is tangent to the parabola at the point
  \(T\), which divides \(QM\) in the same ratio.}
  \label{fig:bezier-parabola-tangent}
\end{figure}

Now apply this to the Chaikin scheme. On the local piece between the midpoints
of two adjacent old edges, take the control triangle \(P_0,P_1,P_2\), where
\(P_1\) is the old vertex and \(P_0\), \(P_2\) are the midpoints of the two
adjacent edges. For \(c=1/4\), the points replacing the vertex \(P_1\) are
exactly the midpoints of the segments \(P_0P_1\) and \(P_1P_2\). By the property
above, the segment joining them is tangent to the corresponding parabola, and
its midpoint \(T\) lies on the parabola. After the next step the same picture is
repeated on the left and right control triangles \(P_0,Q,T\) and \(T,M,P_2\),
then on their halves, and so on, because the triangles \(P_0QT\) and \(TMP_2\)
satisfy the same property. Therefore the Chaikin limit curve coincides with this
quadratic Bezier curve at all dyadic rational parameter values. These points are
everywhere dense by Lemma~\ref{lem:dense-comparison}. Since both curves are
continuous, the local limit and the corresponding parabolic arc coincide
everywhere.

Let us now see what the area formula gives in this case. For \(c=1/4\),
\[
2c(1-2c)=\frac14,
\]
so
\[
C\left(\frac14\right)
=
\frac1{32}
\sum_{m=0}^{\infty}\frac1{4^m}.
\]
Of course, by recalling the usual formula for the sum of a geometric progression
we would immediately obtain the exact value. But let us imagine that this sum is
not known in advance, and compute the same constant directly as the area under
the limiting parabola.

Consider the standard angle with vertex at the origin and neighboring points
\[
(0,1),\qquad (0,0),\qquad (1,0).
\]
The corresponding local piece of the limit curve connects the midpoints of the
two sides,
\[
P_0=\left(0,\frac12\right),
\qquad
P_2=\left(\frac12,0\right),
\]
and is a quadratic Bezier curve with intermediate control point
\[
P_1=(0,0).
\]
Its parametrization is
\[
\gamma(t)
=
(1-t)^2P_0+2t(1-t)P_1+t^2P_2,
\qquad 0\leq t\leq1.
\]
Hence
\[
x(t)=\frac{t^2}{2},
\qquad
 y(t)=\frac{(1-t)^2}{2}.
\]

By the local meaning of the constant, \(C(1/4)\) is the area of the region
between this arc and the coordinate axes. Therefore
\[
C\left(\frac14\right)
=
\int_0^1 y(t)x'(t)\,dt
=
\frac12\int_0^1 t(1-t)^2\,dt
=
\frac1{24}.
\]
On the other hand,
\[
C\left(\frac14\right)
=
\frac1{32}
\sum_{m=0}^{\infty}\frac1{4^m}.
\]
Comparing the two expressions for the same area, we get
\[
\frac1{32}
\sum_{m=0}^{\infty}\frac1{4^m}
=
\frac1{24},
\]
and therefore
\[
\sum_{m=0}^{\infty}\frac1{4^m}
=
\frac43.
\]

Thus, if for some reason the formula for the sum of a geometric progression were
unknown to us, its special case with ratio \(1/4\) could be recovered from the
area of the Chaikin limit curve.

A classical geometric prototype appears in Archimedes' \emph{Quadrature of the
Parabola}: the area of a parabolic segment is \(\frac43\) times the area of a
certain inscribed triangle \cite{ArchimedesQuadrature}. In his construction,
successive generations of triangles have total areas
\[
T,\qquad
\frac{T}{4},\qquad
\frac{T}{16},\qquad
\frac{T}{64},\ldots
\]
and thus lead to the same progression
\[
1+\frac14+\frac1{16}+\frac1{64}+\cdots=\frac43.
\]

In our normalized example, the area of the first cut-off triangle is \(1/32\),
and the full area between the initial angle and the limiting parabola is
\(1/24\). Their ratio is again
\[
\frac{1/24}{1/32}=\frac43.
\]
Thus the appearance of the same constant here is not accidental: both in
Archimedes' quadrature and in the Chaikin scheme, the area of a parabolic region
is assembled from successive generations of triangles decreasing in the same
proportion.

\subsection{Regular Polygons and Approximation of the Circle}

So far the cutting coefficients have been prescribed in advance. We now choose
them in the opposite direction: starting from a regular triangle, we want each
step to send the regular \(3\cdot2^m\)-gon circumscribed about a fixed circle to
the regular \(3\cdot2^{m+1}\)-gon circumscribed about the same circle. Thus we
expect the sequence
\[
3,\ 6,\ 12,\ 24,\ldots
\]
of circumscribed regular polygons, converging to the circle.

\begin{figure}[ht]
  \centering
  \begin{tikzpicture}[
      x=2.2cm,
      y=2.2cm,
      line cap=round,
      triangle/.style={line width=0.9pt,black!55},
      refined/.style={line width=1.1pt,blue!70!black},
      incircle/.style={line width=1.0pt,gray!70},
      point/.style={circle,fill=black,inner sep=1.0pt}
    ]
    \coordinate (A) at (0,0);
    \coordinate (B) at (2,0);
    \coordinate (C) at (1,1.73205);
    \coordinate (O) at (1,0.57735);
    \coordinate (ACa) at (0.33333,0.57735);
    \coordinate (ACb) at (0.66667,1.15470);
    \coordinate (CBa) at (1.33333,1.15470);
    \coordinate (CBb) at (1.66667,0.57735);
    \coordinate (BAa) at (1.33333,0);
    \coordinate (BAb) at (0.66667,0);

    \draw[triangle] (A) -- (C) -- (B) -- cycle;
    \draw[incircle] (O) circle[radius=0.57735];
    \draw[refined]
      (ACa) -- (ACb) -- (CBa) -- (CBb) -- (BAa) -- (BAb) -- cycle;
    \foreach \pointname in {A,B,C} \node[point] at (\pointname) {};
  \end{tikzpicture}
  \caption{An equilateral triangle, its incircle, and the first cutting step,
  which gives a regular hexagon circumscribed about the same circle.}
  \label{fig:equilateral-incircle}
\end{figure}

Let us find the coefficient forced by this requirement. Suppose that after
\(m\) steps we have a regular
\[
N_m=3\cdot2^m
\]
gon with side length \(\ell_m\). After one more cutting step the new
\(2N_m\)-gon has two types of edges.

On each old edge there remains a middle segment of length
\[
(1-2c_m)\ell_m.
\]
The new edge near an old vertex is the base of an isosceles triangle. Since the
interior angle of a regular \(N_m\)-gon is
\[
\pi-\frac{2\pi}{N_m},
\]
the length of this base is
\[
2c_m\ell_m\cos\frac{\pi}{N_m}.
\]

For the next polygon to be regular, these two lengths must be equal:
\[
1-2c_m
=
2c_m\cos\frac{\pi}{N_m}.
\]
Therefore
\[
c_m
=
\frac{1}{2\left(1+\cos\frac{\pi}{N_m}\right)}
=
\frac{1}{4\cos^2\left(\frac{\pi}{2N_m}\right)}.
\]
Since \(N_m=3\cdot2^m\), this gives
\[
c_m
=
\frac{1}{
4\cos^2\left(\dfrac{\pi}{3\cdot2^{m+1}}\right)
}.
\]

This coefficient realizes the desired passage from the regular \(N_m\)-gon to
the regular \(2N_m\)-gon circumscribed about the same circle.

The same coefficients can be written without trigonometric functions. Set
\[
\Delta_0=3,
\qquad
\Delta_m=
\underbrace{2+\sqrt{2+\cdots+\sqrt{2+\sqrt3}}}_{m\ \text{twos and roots}}
\quad(m\ge1).
\]
This nested expression is obtained by elementary trigonometry: consider
half-angle identity
\[
4\cos^2\frac{\theta}{2}=2+2\cos\theta
\]
applied repeatedly, starting from \(2\cos(\pi/6)=\sqrt3\). The same
half-angle formula gives
\[
\Delta_m=4\cos^2\frac{\pi}{3\cdot2^{m+1}},
\qquad
c_m=\frac1{\Delta_m}.
\]
Equivalently,
\[
c_0=\frac13,
\qquad
c_{m+1}
=
\frac{\sqrt{c_m}}{1+2\sqrt{c_m}}.
\]

It remains only to apply the area formula. For the initial equilateral triangle
with side length \(1\),
\[
A(P)=\frac{\sqrt3}{4},
\qquad
B(P)=-\frac{3\sqrt3}{2}.
\]
Its incircle has radius \(\sqrt3/6\), so the limit area is \(\pi/12\). Hence
\[
\frac{\pi}{12}
=
\frac{\sqrt3}{4}
-
\frac{3\sqrt3}{2}C(c_\bullet),
\qquad
\pi=3\sqrt3-18\sqrt3\,C(c_\bullet).
\]
Substituting the definition of \(C(c_\bullet)\) and \(c_m=1/\Delta_m\), we get
the trigonometry-free series
\[
\boxed{
\pi
=
3\sqrt3
-
9\sqrt3
\sum_{m=0}^{\infty}
\frac{1}{\Delta_m^2}
\prod_{j=0}^{m-1}
\frac{2(\Delta_j-2)}{\Delta_j^2}
},
\]
where the empty product is equal to one.
If the radicals are substituted explicitly, the same identity takes the form
\[
\resizebox{\textwidth}{!}{%
\(\displaystyle
\pi
=
3\sqrt3
-
9\sqrt3
\sum_{m=0}^{\infty}
\frac{1}{
\left(
\underbrace{2+\sqrt{2+\cdots+\sqrt{2+\sqrt3}}}_{m\ \text{twos and roots}}
\right)^2
}
\prod_{j=0}^{m-1}
\frac{
2\left(
\underbrace{2+\sqrt{2+\cdots+\sqrt{2+\sqrt3}}}_{j\ \text{twos and roots}}
-2
\right)
}{
\left(
\underbrace{2+\sqrt{2+\cdots+\sqrt{2+\sqrt3}}}_{j\ \text{twos and roots}}
\right)^2
}.
\)%
}
\]
Here an expression with zero twos and roots is understood to be \(3\). The first
few terms expand as
\[
\resizebox{\textwidth}{!}{%
\(\displaystyle
\pi
=
3\sqrt3
-
9\sqrt3\biggl(
\frac19
+
\frac{2}{9(2+\sqrt3)^2}
+
\frac{2}{9\left(2+\sqrt{2+\sqrt3}\right)^2}
\frac{2\sqrt3}{(2+\sqrt3)^2}
+
\frac{2}{9\left(2+\sqrt{2+\sqrt{2+\sqrt3}}\right)^2}
\frac{2\sqrt3}{(2+\sqrt3)^2}
\frac{2\sqrt{2+\sqrt3}}{\left(2+\sqrt{2+\sqrt3}\right)^2}
+
\cdots
\biggr).
\)%
}
\]
Thus \(\pi\) is expressed using only arithmetic operations and successive square
roots.

If the series is cut off after \(M\) terms, the error is asymptotic to
\[
\frac{\pi^3}{27}\,4^{-M},
\]
so each new step approximately divides the error by four.

The same construction works for an arbitrary regular \(n\)-gon. In this case
\[
N_m=n2^m
\]
and one should choose
\[
c_m^{(n)}
=
\frac{1}{
4\cos^2\left(\dfrac{\pi}{n2^{m+1}}\right)
}.
\]
Then after \(m\) steps one obtains a regular \(n2^m\)-gon circumscribed about
the same circle.

For a regular \(n\)-gon with side length \(\ell\),
\[
B(P)
=
-n\ell^2\sin\frac{2\pi}{n}.
\]
Since
\[
A(P)
=
\frac{n\ell^2}{4\tan(\pi/n)},
\]
we get
\[
B(P)
=
-8\sin^2\frac{\pi}{n}\,A(P).
\]
Consequently,
\[
A_\infty
=
\left(
1-8C(c_\bullet)\sin^2\frac{\pi}{n}
\right)A(P).
\]

If the initial regular \(n\)-gon is circumscribed about a circle of radius \(r\),
then
\[
A(P)=nr^2\tan\frac{\pi}{n},
\qquad
A_\infty=\pi r^2.
\]
Therefore, for the corresponding sequence of coefficients,
\[
C(c_\bullet^{(n)})
=
\frac{
1-\dfrac{\pi}{n\tan(\pi/n)}
}{
8\sin^2(\pi/n)
}.
\]

Finally, let us note that the same example also gives an elliptic variant. The
corner-cutting scheme is affinely invariant: if a nondegenerate affine
transformation is applied to the initial polyline, then the same division
parameters are preserved on each edge, and all subsequent polylines and the
limit curve pass to their affine images. Therefore an affine image of the circle
constructed above gives an ellipse tangent to the sides of an arbitrary
nondegenerate triangle at their midpoints. In the standard terminology of
triangle geometry, this particular ellipse is the \emph{Steiner inellipse}: the
unique ellipse inscribed in the triangle and tangent to the three sides at their
midpoints \cite{MindaPhelps2008}.

\subsection{The hyperbola and an algebraic series for \texorpdfstring{\(\log 2\)}{log 2}}

We now pass from circular symmetry to a more general principle. In the case of
the circle, all local configurations of a regular circumscribed polygon are the
same up to rotation. For the hyperbola, the role of rotations will be played by
another group of linear transformations. What matters is not that the curve is a
circle, but that it is mapped to itself, its tangents are mapped to tangents,
and local configurations are carried to one another while ratios on lines are
preserved~\cite{Salmon1879}. Then the cutting coefficient is enough to find in
one standard configuration, after which the same coefficient may be used on the
entire refinement level.

Consider the branch of the hyperbola
\[
\mathcal H
=
\{(x,y)\in\mathbb R^2:xy=1,\ x>0,\ y>0\}
\]
between the points
\[
(1,1)
\qquad\text{and}\qquad
\left(2,\frac12\right).
\]
The area under this arc is
\[
\int_1^2\frac{dx}{x}=\log 2.
\]
Our goal is to choose the corner-cutting coefficients so that the
corresponding limit curve is precisely this arc of the hyperbola. Then the area
formula will give an expression for the logarithm of two.

A convenient parametrization of the hyperbola is
\[
H(t)
=
\begin{pmatrix}
e^t\\
e^{-t}
\end{pmatrix}.
\]
Indeed,
\[
H(0)=(1,1),
\qquad
H(\log 2)=\left(2,\frac12\right).
\]

The construction relies on the one-parameter group of linear
transformations
\[
G_s
=
\begin{pmatrix}
e^s&0\\
0&e^{-s}
\end{pmatrix}.
\]
Explicitly,
\[
G_s
\begin{pmatrix}
x\\
y
\end{pmatrix}
=
\begin{pmatrix}
e^sx\\
e^{-s}y
\end{pmatrix}.
\]

Let us explain precisely which invariance property is being used.
If \((x,y)\in\mathcal H\), then
\[
(e^sx)(e^{-s}y)=xy=1.
\]
Hence
\[
G_s(\mathcal H)=\mathcal H.
\]
Moreover,
\[
H(t+s)=G_sH(t).
\]
Thus, translation of the parameter \(t\mapsto t+s\) is realized by a
linear transformation of the plane.

The tangent line to the hyperbola at the point \(H(t)\) has equation
\[
\ell_t:\qquad e^{-t}x+e^ty=2.
\]
The transformations \(G_s\) preserve the entire family of tangent
lines. Indeed, if \((x,y)\in\ell_t\), then
\[
e^{-(t+s)}e^sx+e^{t+s}e^{-s}y
=
e^{-t}x+e^ty
=
2,
\]
and therefore
\[
G_s(\ell_t)=\ell_{t+s}.
\]

This is the invariance that makes a uniform corner-cutting rule
possible. Every local configuration
\[
\ell_{t-h},\qquad \ell_t,\qquad \ell_{t+h}
\]
is transformed by \(G_{-t}\) into the standard configuration
\[
\ell_{-h},\qquad \ell_0,\qquad \ell_h.
\]
Since affine transformations preserve ratios in which points divide
a line segment, the corner-cutting coefficient depends only on the
parameter step \(h\), and not on the position \(t\) along the
hyperbola. Consequently, one common coefficient can be used at every
vertex of a given refinement level.

Fix \(h>0\) and consider the tangent lines
\[
\ell_{kh},
\qquad k\in\mathbb Z.
\]
Let
\[
q_k=\ell_{kh}\cap\ell_{(k+1)h}
\]
be the intersection points of consecutive tangents. The edge
\[
[q_{k-1},q_k]
\]
lies on the tangent line \(\ell_{kh}\).

We first show that the point of tangency \(H(kh)\) is the midpoint of
this edge. By applying \(G_{-kh}\), it is enough to verify the claim
for \(k=0\).

The tangent line \(\ell_0\) has equation
\[
x+y=2.
\]
Parametrize it by
\[
(x,y)=(1+u,1-u).
\]
Substitution into the equation of \(\ell_h\) gives
\[
e^{-h}(1+u)+e^h(1-u)=2,
\]
and hence
\[
u
=
\frac{\cosh h-1}{\sinh h}
=
\tanh\frac h2.
\]
Therefore, the point \(\ell_0\cap\ell_h\) corresponds to
\[
u=\tanh\frac h2,
\]
whereas \(\ell_{-h}\cap\ell_0\) corresponds to
\[
u=-\tanh\frac h2.
\]
Their midpoint corresponds to \(u=0\), and is therefore equal to
\[
(1,1)=H(0).
\]
Thus, for every \(k\),
\[
H(kh)=\frac{q_{k-1}+q_k}{2}.
\]

We now refine the tangent polygon by halving the parameter step.
Between the old tangent lines we insert the new tangents with
parameters
\[
\left(k+\frac12\right)h.
\]
On the edge lying on \(\ell_0\), the endpoints of the old edge have
coordinates
\[
u=-a,\qquad u=a,
\qquad
a=\tanh\frac h2,
\]
while the two new points have coordinates
\[
u=-b,\qquad u=b,
\qquad
b=\tanh\frac h4.
\]

Let \(c\) be the corner-cutting coefficient. The left new point must
satisfy
\[
-b=(1-c)(-a)+ca.
\]
It follows that
\[
c=\frac{a-b}{2a}.
\]
Using the double-angle identity
\[
a=\tanh\frac h2
=
\frac{2b}{1+b^2},
\]
we obtain
\[
c
=
\frac{1-b^2}{4}
=
\frac{1}{4\cosh^2(h/4)}.
\]

Therefore, the tangent polygon with parameter step \(h\) is transformed
into the tangent polygon with parameter step \(h/2\) by the
corner-cutting coefficient
\[
c(h)=\frac{1}{4\cosh^2(h/4)}.
\]

For the arc under consideration, the initial parameter step is
\[
h_0=\log 2.
\]
After \(m\) refinement steps,
\[
h_m=\frac{\log 2}{2^m}.
\]
Hence the required coefficients are
\[
c_m
=
\frac{1}{
4\cosh^2\left(\dfrac{\log 2}{2^{m+2}}\right)
}.
\]
With this choice, the polygon obtained after \(m\) steps is exactly the
tangent polygon with parameter step \((\log 2)/2^m\). Since this step
tends to zero, the polygons converge to the hyperbolic arc.

The hyperbolic functions can now be eliminated. Set
\[
a_m=\sqrt[2^{m+1}]{2}.
\]
Then
\[
c_m
=
\frac{a_m}{(1+a_m)^2}
=
\frac{\sqrt[2^{m+1}]{2}}
{\left(1+\sqrt[2^{m+1}]{2}\right)^2}.
\]
In particular,
\[
c_0
=
\frac{\sqrt2}{(1+\sqrt2)^2}
=
3\sqrt2-4.
\]

The same sequence can also be defined recursively. From the
half-argument identity for the hyperbolic cosine we obtain
\[
c_{m+1}
=
\frac{\sqrt{c_m}}{1+2\sqrt{c_m}}.
\]
Thus,
\[
c_0=3\sqrt2-4,
\qquad
c_{m+1}
=
\frac{\sqrt{c_m}}{1+2\sqrt{c_m}},
\]
which defines all coefficients using only arithmetic operations and
square roots.

It remains to apply the area formula. Let
\[
L=\log 2.
\]
Three consecutive vertices of the initial tangent polygon are
\[
q_{-1}
=
\ell_{-L}\cap\ell_0
=
\left(\frac23,\frac43\right),
\]
\[
q_0
=
\ell_0\cap\ell_L
=
\left(\frac43,\frac23\right),
\]
and
\[
q_1
=
\ell_L\cap\ell_{2L}
=
\left(\frac83,\frac13\right).
\]
The midpoints of the two adjacent edges are
\[
\frac{q_{-1}+q_0}{2}=(1,1),
\qquad
\frac{q_0+q_1}{2}
=
\left(2,\frac12\right).
\]

The initial local polygonal arc between these two points consists of
two tangent segments. The first lies on
\[
y=2-x,
\]
and the second on
\[
y=1-\frac{x}{4}.
\]
The area below this polygonal arc is
\[
\int_1^{4/3}(2-x)\,dx
+
\int_{4/3}^{2}
\left(1-\frac{x}{4}\right)\,dx
=
\frac23.
\]
The area below the limiting hyperbolic arc is
\[
\int_1^2\frac{dx}{x}
=
\log 2.
\]

Although the arc considered here is not closed, the local form of the
area formula applies. Equivalently, one may close both the initial and
the limiting arcs by the same fixed polygonal path; its contribution
then cancels when the two areas are compared.

Furthermore,
\[
[q_{-1}-q_0,\;q_1-q_0]
=
-\frac23.
\]
Taking the orientation into account, the area formula gives
\[
\log 2-\frac23
=
\frac23\,C(c_\bullet).
\]
Hence
\[
\log 2
=
\frac23+\frac23\,C(c_\bullet),
\]
and therefore
\[
\log 2
=
\frac23
+
\frac13
\sum_{m=0}^{\infty}
c_m^2
\prod_{j=0}^{m-1}
2c_j(1-2c_j).
\]

Finally, substitute
\[
c_m=\frac{a_m}{(1+a_m)^2},
\qquad
a_m=\sqrt[2^{m+1}]{2}.
\]
Since
\[
a_{m+1}^2=a_m,
\]
the summand simplifies to
\[
c_m^2
\prod_{j=0}^{m-1}
2c_j(1-2c_j)
=
\frac{
3\cdot 2^{m+1}(a_m-1)^3
}{
(a_m+1)(a_m^2+1)
}.
\]

We therefore arrive at the purely algebraic identity
\[
\boxed{
\log 2
=
\frac23
+
\sum_{m=0}^{\infty}
\frac{
2^{m+1}
\left(\sqrt[2^{m+1}]{2}-1\right)^3
}{
\left(\sqrt[2^{m+1}]{2}+1\right)
\left(\sqrt[2^m]{2}+1\right)
}
}.
\]

\section{Conclusion and Further Questions}

We have considered symmetric nonstationary corner-cutting schemes in which, on
each refinement level, the same coefficient is applied to all vertices, while
the coefficient itself may vary from step to step. For such schemes we proved
the existence of a continuous limit curve and the convergence of the oriented
areas of the approximating polygons to the area of this limit.

The main result is the formula
\[
A_\infty=A(P)+C(c_\bullet)B(P),
\]
where the quantities \(A(P)\) and \(B(P)\) depend only on the initial oriented
polygon, while the constant
\[
C(c_\bullet)
=
\sum_{m=0}^{\infty}
\frac{c_m^2}{2}
\prod_{j=0}^{m-1}2c_j(1-2c_j)
\]
depends only on the sequence of cutting coefficients. Thus the geometric data
of the initial contour and the rule of its successive refinement enter the
answer independently of one another. The local interpretation of this constant
as the area of a cut in the standard angle explains the affine nature of the
formula.

The examples considered above show two sides of the result. In the stationary
case the formula is summed directly and, in particular, describes the area
change for de Rham curves and the Chaikin scheme. Conversely, a known limit
curve allows one to compute \(C(c_\bullet)\) geometrically. This gives
representations of the numbers \(\pi\) and \(\log 2\) as series determined by
successive cuts. Thus corner cutting may be viewed not only as a method for
constructing curves, but also as a geometric mechanism for decomposing area into
successive local contributions.

A natural continuation would be to study the regularity of the limit curve for
general nonstationary sequences \(c_\bullet\). Another interesting direction is
the inverse problem: to describe sequences of coefficients for which the limit
coincides with a prescribed curve. In particular, one is led to the problem of
classifying curves whose family of tangents is compatible with a single cutting
coefficient on each refinement level. Other possible directions include schemes
with vertex-dependent coefficients and higher-dimensional analogues of the area
formula for truncating polyhedra.

\section*{Acknowledgments}
\addcontentsline{toc}{section}{Acknowledgments}

The author thanks N.~N.~Kosovskii for suggesting the topic of this work,
D.~V.~Karpov for help with terminology at an early stage, and
A.~D.~Baranov for consultations on subtleties of convergence.

\phantomsection

\end{document}